\newtheorem{theorem}{Theorem}
\newtheorem{lemma}{Lemma}
\newcommand{\C}{\ensuremath{\mathbb{C}}}
\newcommand{\R}{\ensuremath{\mathbb{R}}}
\newcommand{\T}{\ensuremath{\mathbb{T}}}
\newcommand{\Z}{\ensuremath{\mathbb{Z}}}
\newcommand{\Tr}{\ensuremath{\mathrm{T}}}
\renewcommand{\Im}{\ensuremath{\mathrm{Im}\,}}
\newcommand{\dk}{\ensuremath{\mathfrak{d}}}
\newcommand{\gk}{\ensuremath{\mathfrak{g}}}
\DeclareMathOperator{\id}{id}
\begin{document}

\title{A Proof of the Invariant Torus Theorem of Kolmogorov}

\author{Jacques F{\'e}joz}


\begin{abstract}
  The invariant torus theorem is proved using a simple fixed point
  theorem.
\end{abstract}

\maketitle

Let $\mathcal{H}$ be the space of germs along $\Tr_0^n := \T^n \times
\{0\}$ of real analytic Hamiltonians in $\T^n \times \R^n =
\{(\theta,r)\}$ ($\T^n=\R^n/\Z^n$), endowed with the usual, inductive
limit topology (see section~\ref{sec:action}). The vector field
associated with $H \in \mathcal{H}$ is
$$\vec H : \quad \dot\theta = \partial_r H, \quad \dot
r=-\partial_\theta H.$$ 

For $\alpha\in\R^n$, let $\mathcal{K}^\alpha$ be the affine subspace
of Hamiltonians $K\in \mathcal{H}$ such that $K|_{\Tr_0^n}$ is
constant (i.e.  $\Tr_0^n$ is invariant) and $\vec
K|_{\Tr_0^n}=\alpha$:
$$\mathcal{K}^\alpha = \{ K \in \mathcal{H}, \; \exists c \in \R, \;
K (\theta,r) = c + \alpha \cdot r + O(r^2)\}, \quad \alpha \cdot r =
\alpha_1 r_1 + \cdots + \alpha_n r_n,$$ where $O(r^2)$ are terms of
the second ordrer in $r$, which depend on $\theta$. 

Let also $\mathcal{G}$ be the space of germs along $\Tr_0^n$ of real
analytic symplectomorphisms $G$ in $\T^n \times \R^n$ of the following
form:
$$G(\theta,r) = (\varphi(\theta), (r+\rho(\theta)) \cdot
\varphi'(\theta)^{-1}),$$ where $\varphi$ is an isomorphism of $\T^n$
fixing the origin (meant to straighten the flow on an invariant
torus), and $\rho$ is a closed $1$-form on $\T^n$ (meant to straighten
an invariant torus).

In the whole paper we fix $\alpha \in \R^n$ Diophantine ($0< \gamma
\ll 1 \ll \tau$; see \cite{Poschel:2001}):
$$|k \cdot \alpha| \geq \gamma |k|^{-\tau} \quad (\forall k \in \Z^n
\setminus \{0\}), \quad |k|= |k_1| + \cdots + |k_n|$$ 
and 
$$K^o(\theta,r) = c^o + \alpha \cdot r + Q^o(\theta) \cdot r^2 +
O(r^3) \in \mathcal{K}^\alpha$$ such that the average of the quadratic
form valued function $Q^o$ be non-degenerate:
$$\det \int_{\T^n} Q^o(\theta) \, d\theta \neq 0.$$ 


\begin{theorem}[Kolmogorov \cite{Kolmogorov:1954, Chierchia:2008bis}]
  \label{thm:kolmogorov} For every $H\in\mathcal{H}$ close to $K^o$,
  there exists a unique $(K,G) \in \mathcal{K}^\alpha \times
  \mathcal{G}$ close to $(K^o,\id)$ such that $H = K \circ G$ in some
  neighborhood of $G^{-1}(\Tr_0^n)$.
\end{theorem}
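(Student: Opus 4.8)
I would read the factorization $H=K\circ G$ as the single equation $\Phi(K,G)=H$ for the composition map
$$\Phi:\ \mathcal{K}^\alpha\times\mathcal{G}\longrightarrow\mathcal{H},\qquad \Phi(K,G)=K\circ G,$$
and prove that $\Phi$ is a local diffeomorphism at $(K^o,\id)$, where $\Phi(K^o,\id)=K^o$. Here $\mathcal{H}$ is the inductive limit of the Banach spaces $\mathcal{H}_s$ of germs holomorphic and bounded on the complex $s$-neighbourhood of $\Tr_0^n$, with corresponding scales $\mathcal{K}^\alpha_s$, $\mathcal{G}_s$; since composing with a map $G$ close to the identity shrinks the domain a little (and moves the reference torus to $G^{-1}(\Tr_0^n)$), $\Phi$ is only defined as a map $\mathcal{K}^\alpha_s\times\mathcal{G}_s\to\mathcal{H}_{s'}$ with $s'<s$, so the ordinary inverse function theorem does not apply. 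Instead I would invoke an abstract inverse function theorem for analytic maps between scales of Banach spaces — the ``fixed point theorem'' advertised in the abstract — whose hypotheses are that $\Phi$ be analytic between the scales and that its differential $D\Phi(K,G)$ admit, for $(K,G)$ near $(K^o,\id)$, a right inverse bounded by $C\,\delta^{-\sigma}$ as an operator that loses width $\delta$; its conclusion is that $\Phi$ is a local bijection, which gives both existence and uniqueness.

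Everything then hinges on the differential $L:=D\Phi(K^o,\id)$. Differentiating $t\mapsto K_t\circ G_t$ at $t=0$ gives $L(\delta K,v)=\delta K+\mathrm{d}K^o(v)$, with $v=\dot G_0$. The tangent space $T\mathcal{K}^\alpha$ is the space of germs $\delta c+O(r^2)$ with $\delta c\in\R$; and $T_{\id}\mathcal{G}$ is the space of (possibly only locally Hamiltonian) vector fields $X_{\mathcal{F}}$ generated by $\mathcal{F}(\theta,r)=u(\theta)\cdot r+b\cdot\theta+g(\theta)$, where $u:\T^n\to\R^n$ with $u(0)=0$ is the variation of $\varphi$, and $b\in\R^n$ and $g:\T^n\to\R$ (modulo constants) encode respectively the cohomology class and the exact part of the variation of the closed form $\rho$, the monomial $b\cdot\theta$ being ill defined on $\T^n$ but having a well-defined differential. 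With $K^o=c^o+\alpha\cdot r+Q^o(\theta)\cdot r^2+O(r^3)$ and $\partial_\alpha:=\alpha\cdot\partial_\theta$, a short computation gives
$$\mathrm{d}K^o(v)=\{K^o,\mathcal{F}\}=-\bigl(b\cdot\alpha+\partial_\alpha g(\theta)\bigr)-\bigl(\partial_\alpha u(\theta)+2\,Q^o(\theta)(b+g'(\theta))\bigr)\cdot r+O(r^2).$$
Given $h=h_0(\theta)+h_1(\theta)\cdot r+O(r^2)\in\mathcal{H}$, I would then solve $L(\delta K,v)=h$ order by order in $r$ (writing $\overline{\,\cdot\,}$ for the average over $\T^n$): the zero-average part of the $r^0$-equation reads $-\partial_\alpha g=h_0-\overline{h_0}$, which determines $g$ modulo constants by inverting the transport operator $\partial_\alpha$ — this is the only place the Diophantine condition on $\alpha$ is used, and it costs the width loss $\delta^{-\sigma}$; the $r^1$-equation $-\partial_\alpha u=h_1+2Q^o(b+g')$ is solvable for $u$ exactly when its right-hand side has zero average, and this solvability condition, $2\bigl(\int_{\T^n}Q^o\,d\theta\bigr)b=-\overline{h_1}-2\,\overline{Q^o g'}$, determines $b$ because $\det\int_{\T^n}Q^o\,d\theta\neq0$; then $u$ is obtained modulo constants (the constant fixed by $u(0)=0$) by inverting $\partial_\alpha$ once more; the average of the $r^0$-equation fixes $\delta c$; and the $O(r^2)$-part of the equation determines the higher-order terms of $\delta K$ with no remaining obstruction. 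Hence $L$ is a bijection onto $\mathcal{H}$ with a right inverse obeying the required tame bound, the particular shapes of $\mathcal{K}^\alpha$ and $\mathcal{G}$ being exactly what makes the count of free constants balance.

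Finally I would feed this into the abstract theorem. This requires checking that $D\Phi(K,G)$ stays invertible, with uniform estimates, for $(K,G)$ in a neighbourhood of $(K^o,\id)$ — a perturbation of the computation above, using that the averaged Hessian stays close to $\int_{\T^n}Q^o\,d\theta$ hence invertible, and that the relevant transport operators stay close to $\partial_\alpha$ — and that $(K,G)\mapsto D\Phi(K,G)$ is analytic between the scales, which follows from Cauchy estimates for composition of holomorphic maps. The abstract inverse function theorem then produces, for every $H$ close to $K^o$ in some $\mathcal{H}_s$, a unique $(K,G)$ close to $(K^o,\id)$ with $K\circ G=H$; analyticity of $K$ and $G$ and the fact that $(K,G)$ lands in $\mathcal{K}^\alpha\times\mathcal{G}$ are built into the functional setting.

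I expect the main obstacle to be not the (transparent) linear algebra of $L$ but the quantitative packaging: verifying the hypotheses of the fixed point theorem uniformly, i.e.\ controlling simultaneously the two sources of width loss — composition $K\circ G$ and the small-divisor inversion of $\partial_\alpha$ — and making the right inverse of $D\Phi(K,G)$ depend on the base point in a way compatible with the convergence scheme hidden inside the abstract theorem. A secondary delicate point is the quotient by constants in $\mathcal{G}$ together with the normalisation $\varphi(0)=0$, which has to be arranged so that the source and target spaces of $L$ are genuinely in bijection and the inverse is single-valued.
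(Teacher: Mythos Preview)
Your linearized analysis is correct and is exactly the content of the paper's Lemma~\ref{lm:1}: solving $\dot K+K'\cdot\dot G=\dot H$ order by order in $r$, with the Diophantine condition used to invert $L_\alpha=\alpha\cdot\partial_\theta$ and the nondegeneracy of $\int Q^o$ used to fix the constant part $\dot R$ of the closed form. So the heart of the argument is the same.

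The packaging of the nonlinear step, however, differs from the paper in a way worth noting. You propose to apply a Nash--Moser inverse function theorem directly to $\Phi(K,G)=K\circ G$, which obliges you to invert $D\Phi(K,G)$ at \emph{every} nearby base point $(K,G)$; you then have to argue (by conjugating with $G$) that this reduces to the computation at $G=\id$. The paper sidesteps this entirely: it does not iterate on $(K,G)$ but on the pair $(K,\dot H)$ consisting of the current normal form and the current \emph{error}, via the explicit Newton step
\[
\phi(K,\dot H)=\bigl(K+\dot K,\ (K+\dot H)\circ\exp(-\dot G)-(K+\dot K)\bigr),
\]
where $(\dot K,\dot G)$ solves the linearized equation of Lemma~\ref{lm:1} at $(K,\id)$. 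Thus the linearization is always taken at a point of $\mathcal{K}^\alpha\times\{\id\}$, never at a general $G$; the successive $\exp(-\dot G_j)$ are composed only at the very end to reconstruct $G$. Lemma~\ref{lm:2} shows the new error is quadratically small, and the ``fixed point theorem'' in the appendix is not an abstract inverse function theorem at all but the elementary statement that an iteration satisfying $|X-x|_s,|Y|_s\le c\sigma^{-t}|y|_{s+\sigma}^2$ converges on a shrinking scale of widths. What this buys is a shorter, entirely self-contained proof with no black box; what your formulation buys is a cleaner conceptual statement ($\Phi$ is a local diffeomorphism), at the cost of either invoking a heavier theorem or reproving it.
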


See \cite{Poschel:2001, Sevryuk:2003} and references therein for
background. The functional setting below is related
to~\cite{Fejoz:2010}.

\section{The action of a group of symplectomorphisms}
\label{sec:action}

Define complex extensions $\T^n_\C = \C^n / \Z^n$ and $\Tr^n_\C =
\T^n_\C \times \C^n$, and neighborhoods ($0<s<1$)
$$\T^n_s = \{\theta \in \T^n_\C, \; \max_{1\leq j\leq n} |\Im
\theta_j| \leq s\} \quad \mbox{and} \quad \Tr^n_s = \{ (\theta,r)\in
\Tr^n_\C, \, \max_{1\leq j \leq n} \max \left( |\Im
  \theta_j|, |r_j| \right) \leq s\}.$$ 

For complex extensions $U$ and $V$ of real manifolds, denote by
$\mathcal{A}(U,V)$ the Banach space of real holomorphic maps from the
interior of $U$ to $V$, which extend continuously on $U$;
$\mathcal{A}(U):=\mathcal{A}(U,\C)$.

\bigskip $\bullet$ Let $\mathcal{H}_s = \mathcal{A}(\Tr^n_s)$ with norm
$|H|_s := \sup_{(\theta,r)\in \Tr^n_s} |H(\theta,r)|$, such that
$\mathcal{H} = \cup_s \mathcal{H}_s$ be their inductive limit.

Fix $s_0$. There exist $\epsilon_0$ such that $K^o \in
\mathcal{H}_{s_0}$ and, for all $H \in \mathcal{H}_{s_0}$ such that
$|H-K^o|_{s_0} \leq \epsilon_0$,
\begin{equation}
  \label{eq:det}
  \left|\det \int_{\T^n} \frac{\partial^2 H}{\partial r^2}(\theta,0) \,
    d\theta \right| \geq \frac{1}{2} \left|\det \int_{\T^n}
    \frac{\partial^2 K^o}{\partial r^2}(\theta,0) \, d\theta \right|
  \neq 0.
\end{equation}

Hereafter we assume that $s$ is always $\geq s_0$. Set
$\mathcal{K}_s^\alpha = \{K \in \mathcal{H}_s \cap \mathcal{K}^\alpha,
\; |K-K^o|_{s_0} \leq \epsilon_0\}$, and let $\mathcal{\vec K}_s
\equiv \R \oplus O(r^2)$ be the vector space directing
$\mathcal{K}^\alpha_s$.

$\bullet$ Let $\mathcal{D}_s$ be the space of isomorphisms $\varphi\in
\mathcal{A}(\T^n_s,\T^n_\C)$ with $\varphi(0)=0$ and $\mathcal{Z}_s$
be the space of bounded real holomorphic closed $1$-forms on $\T^n_s$.
The semi-direct product $\mathcal{G}_s = \mathcal{Z}_s \rtimes
\mathcal{D}_s$ acts faithfully and symplectically on the phase space
by
\begin{equation}
  \label{eq:action}
  G : \T^n_s \rightarrow \T^n_\C, \quad (\theta,r) \mapsto
  (\varphi(\theta), (\rho(\theta)+r) \cdot \varphi'(\theta)^{-1}), \quad
  G=(\rho,\varphi),
\end{equation}
and, to the right, on $\mathcal{H}_s$ by $\mathcal{H}_s \rightarrow
\mathcal{A}(G^{-1}(\T^n_s))$, $K \mapsto K \circ G$.

$\bullet$ Let $\dk_s :=\{\dot\varphi \in \mathcal{A}(\T^n_s)^n,
\;\dot\varphi(0)=0\}$ with norm $|\dot\varphi|_s :=
\max_{\theta\in\T^n_s} \max_{1\leq j \leq n} |\dot\varphi_j(\theta)|$,
be the space of vector fields on $\T^n_s$ which vanish at $0$.
Similarly, let $|\dot \rho|_s = \max_{\theta\in\T^n_s} \max_{1\leq j
  \leq n}, |\dot \rho_j(\theta)|$ on $\mathcal{Z}_s$.  An element
$\dot G = (\dot\rho,\dot\varphi)$ of the Lie algebra $\mathfrak{g}_s =
\mathcal{Z}_s \oplus \dk_s$ (with norm $|(\dot\rho,\dot\varphi)|_s =
\max(|\dot\rho|_s,|\dot\varphi|_s)$) identifies with the vector field
\begin{equation}
  \label{eq:dot}
  \dot G : \T^n_s \rightarrow \C^n, \quad (\theta,r) \mapsto
  (\dot\varphi(\theta), \dot\rho(\theta) - r \cdot
  \dot\varphi'(\theta)), 
\end{equation}
whose exponential is denoted by $\exp\dot G$. It acts infinitesimally
on $\mathcal{H}_s$ by $\mathcal{H}_s \rightarrow \mathcal{H}_s$, $K
\mapsto K'\cdot \dot G$.

Constants $\gamma_i,\tau_i,c_i,t_i$ below do not depend on $s$ or
$\sigma$.

\setcounter{lemma}{-1}
\begin{lemma} \label{lm:exp} If $\dot G \in \gk_{s+\sigma}$ and $|\dot
  G|_{s+\sigma} \leq \gamma_0 \sigma^2$, then $\exp \dot G \in
  \mathcal{G}_s$ and $|\exp \dot G -\id|_s \leq c_0 \sigma^{-1} |\dot
  G|_{s+\sigma}$.
\end{lemma}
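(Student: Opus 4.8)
The plan is to construct $\exp\dot G$, for $\dot G=(\dot\rho,\dot\varphi)$, as the time-$1$ flow of the vector field \eqref{eq:dot} on phase space, and to read off from that flow the data $(\rho,\varphi)$ displaying it as an element of $\mathcal{G}_s$. I would first treat the base flow: the $\theta$-component of \eqref{eq:dot} depends on $\theta$ alone, so $\theta(t)$ obeys the autonomous equation $\dot\theta=\dot\varphi(\theta)$, and a $\theta$-trajectory issued from $\T^n_s$ moves by at most $|\dot\varphi|_{s+\sigma}\leq\gamma_0\sigma^2\leq\sigma/2$ over $|t|\leq1$ (choosing $\gamma_0\leq1/2$ and recalling $s+\sigma<1$, so $\sigma<1$), hence stays in $\T^n_{s+\sigma}$, where \eqref{eq:dot} is defined. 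By holomorphic dependence on initial data the time-$t$ flow $\varphi_t\colon\T^n_s\to\T^n_{s+\sigma/2}$ is then well defined and holomorphic for $|t|\leq1$; it fixes $0$ because $\dot\varphi(0)=0$; and since $\varphi_{-1}$ furnishes a holomorphic inverse while a Cauchy estimate makes $|\varphi_t'-I|_s$ as small as one wishes, $\varphi:=\varphi_1$ is an isomorphism onto its image, i.e. $\varphi\in\mathcal{D}_s$.

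Next, since \eqref{eq:dot} is affine in $r$, its flow takes the form $\Phi_t(\theta,r)=(\varphi_t(\theta),\,r\,\Lambda_t(\theta)+\beta_t(\theta))$, where along each trajectory $\Lambda_t$ and $\beta_t$ satisfy the linear equations $\dot\Lambda_t=-\Lambda_t\dot\varphi'(\varphi_t)$, $\Lambda_0=I$, and $\dot\beta_t=\dot\rho(\varphi_t)-\beta_t\dot\varphi'(\varphi_t)$, $\beta_0=0$. Differentiating the flow identity in the base variable shows that $(\varphi_t')^{-1}$ solves exactly the $\Lambda_t$-equation with the same initial value, so $\Lambda_t=(\varphi_t')^{-1}$ and
$$\Phi_1(\theta,r)=\bigl(\varphi(\theta),\,(\rho(\theta)+r)\,\varphi'(\theta)^{-1}\bigr),\qquad\rho(\theta):=\beta_1(\theta)\,\varphi'(\theta),$$
which is precisely the shape \eqref{eq:action}; I would then set $\exp\dot G:=\Phi_1$. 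To finish the membership claim I must check $\rho\in\mathcal{Z}_s$: holomorphy and boundedness on $\T^n_s$ come from the estimates below, and closedness follows from symplecticity — a direct computation shows that $\iota_{\dot G}\omega$ (with $\omega=\sum_j dr_j\wedge d\theta_j$) is closed precisely because $\dot\rho$ is closed, equivalently \eqref{eq:dot} is the sum of the cotangent lift of $\dot\varphi$ and the symplectic fiber translation generated by the closed form $\dot\rho$, so $\Phi_1$ is a symplectomorphism; and since any map of the form \eqref{eq:action} factors as (cotangent lift of $\varphi$) after (fiber translation by $\rho$), with the first factor always symplectic, symplecticity of $\Phi_1$ forces $\rho$ closed.

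For the estimates I would set $\delta:=2\sigma^{-1}|\dot\varphi|_{s+\sigma}$, note $|\dot\varphi'|_{s+\sigma/2}\leq\delta$ by Cauchy's inequality and $\delta\leq2\gamma_0\sigma\leq2\gamma_0$ is small, and apply Gronwall to the three linear equations above to obtain $|\varphi-\id|_s\leq|\dot\varphi|_{s+\sigma}$, $|(\varphi')^{\pm1}-I|_s\lesssim\delta$, and $|\beta_1|_s\lesssim|\dot\rho|_{s+\sigma}$, whence $|\rho|_s\lesssim|\dot\rho|_{s+\sigma}$. Since $|r_j|\leq s<1$ on $\Tr^n_s$ and
$$\Phi_1(\theta,r)-(\theta,r)=\bigl(\varphi(\theta)-\theta,\;r\,(\varphi'(\theta)^{-1}-I)+\beta_1(\theta)\bigr),$$
the sup-norm of the left-hand side over $\Tr^n_s$ is $\lesssim|\dot\varphi|_{s+\sigma}+\delta+|\dot\rho|_{s+\sigma}\lesssim\sigma^{-1}|\dot G|_{s+\sigma}$ (using $\sigma<1$), which is the claimed bound $|\exp\dot G-\id|_s\leq c_0\sigma^{-1}|\dot G|_{s+\sigma}$.

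The one genuinely delicate point is the loss of analyticity width: $\gamma_0$ must be chosen small enough that base trajectories leaving $\T^n_s$ do not escape $\T^n_{s+\sigma}$ — where \eqref{eq:dot} lives — within time $1$, while still leaving a strip of width $\sim\sigma$ on which to run the Cauchy estimate for $\dot\varphi'$; that single derivative is the only source of the factor $\sigma^{-1}$ in the conclusion. Everything else is routine Gronwall/Cauchy bookkeeping, and the algebraic identity $\Lambda_t=(\varphi_t')^{-1}$ — the cotangent-lift nature of the homogeneous part of \eqref{eq:dot} — is exactly what keeps $\exp\dot G$ inside the group $\mathcal{G}_s$ rather than merely inside some larger group of symplectomorphisms of the phase space.
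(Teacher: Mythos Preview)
Your argument is correct and essentially complete, but it follows a different route from the paper. The paper treats the flow of $\dot G$ on the full $2n$-dimensional phase space in one stroke, via a Picard contraction: with $\delta=\sigma/3$ and $\gamma_0=(36n)^{-1}$ it bounds the vector-field norm $\|\dot G\|_{s+2\delta}\leq 2n\delta^{-1}|\dot G|_{s+3\delta}\leq\delta/2$, then shows that the Picard operator $f\mapsto\int_0^t\dot G(\,\cdot+f(s,\cdot))\,ds$ is a $\tfrac12$-contraction on a ball in $\mathcal{A}(D_s\times\Tr^n_s)^{2n}$, whose fixed point at $t=1$ is $\exp\dot G-\id$; the estimate drops out with $c_0=6n$. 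Membership in $\mathcal{G}_s$ is then argued abstractly: the curve $t\mapsto\exp(t\dot G)$ stays tangent to $\mathcal{G}_s$, a closed submanifold, hence lies in it (the paper mentions ``the method of the variation of constants'' as an alternative). Your proof \emph{is} that alternative: you exploit the fibered and affine-in-$r$ structure of~\eqref{eq:dot} to first flow in the base, then solve the linear fiber equation explicitly, identify $\Lambda_t=(\varphi_t')^{-1}$ by ODE uniqueness, and read off the form~\eqref{eq:action} directly; closedness of $\rho$ you deduce from symplecticity. This is more hands-on and makes the membership $\exp\dot G\in\mathcal{G}_s$ completely transparent, at the cost of tracking three separate Gronwall estimates instead of one Picard bound; the paper's route is terser but leaves the submanifold step somewhat implicit.
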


\begin{proof}
  Let $\chi_s = \mathcal{A}(\T^n_s)^{2n}$, with norm $\|v\|_s
  =\max_{\theta\in \T^n_s} \max_{1\leq j \leq n} |v_j(\theta)|$.  Let
  $\dot G \in \gk_{s+\sigma}$ with $|\dot G|_{s+\sigma} \leq \gamma_0
  \sigma^2$, $\gamma_0 := (36n)^{-1}$.  Using definition~(\ref{eq:dot}) and
  Cauchy's inequality, we see that if $\delta:=\sigma/3$,
  $$\|\dot G\|_{s+2\delta} = \max \left(| \dot\varphi
    |_{s+2\delta}, |\dot\rho + r \cdot
    \dot\varphi'(\theta)|_{s+2\delta} \right) \leq 2n\delta^{-1} |\dot
  G|_{s+3\delta} \leq \delta/2.$$

  Let $D_s = \{t\in \C, \; |t|\leq s\}$ and $F := \left\{f \in
    \mathcal{A}(D_s \times \T^n_s)^{2n}, \; \forall (t,\theta) \in D_s
    \times \T^n_s,\; |f(t,\theta)|_s \leq \delta \right\}$. By
  Cauchy's inequality, the Lipschitz constant of the Picard operator
  $$P : F\rightarrow F, \quad f \mapsto Pf, \quad (Pf)(t,\theta) =
  \int_0^t \dot G(\theta + f(s,\theta)) \, ds$$ is $\leq 1/2$.  Hence,
  $P$ possesses a unique fixed point $f \in F$, such that $f(1,\cdot)
  = \exp(\dot G) - \id$ and $|f(1,\cdot)|_s\leq \|\dot G\|_{s+\delta}
  \leq c_0 \sigma^{-1}|\dot G|_{s+\sigma}$, $c_0 = 6n$.
  
  Also, $\exp \dot G \in \mathcal{G}_s$ because at all times the curve
  $\exp(t \dot G)$ is tangent to $\mathcal{G}_s$, locally a closed
  submanifold of $\mathcal{A}(\T^n_s,\T^n_\C)$ (the method of the
  variation of constants gives an alternative proof).
\end{proof}

\section{A property of infinitesimal transversality } 
\label{sec:transversality}

We will show that locally $\mathcal{\vec K}_s$ is tranverse to the
infinitesimal action of $\mathfrak{g}_s$ on $\mathcal{H}_{s+\sigma}$.

\begin{lemma} \label{lm:1} For all $(K, \dot H) \in
  \mathcal{K}^\alpha_{s+\sigma} \times \mathcal{H}_{s+\sigma}$, there
  exists a unique $(\dot K,\dot G) \in \mathcal{\vec K}_s \times
  \mathfrak{g}_s$ such that
  $$\dot K + K' \cdot \dot G = \dot H \quad \mbox{and} \quad \max
  (|\dot K|_s,|\dot G|_s) \leq c_1 \sigma^{-t_1} \left(1 +
    |K|_{s+\sigma} \right)|\dot H|_{s+\sigma}.$$
\end{lemma}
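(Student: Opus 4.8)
The plan is to solve the cohomological equation $\dot K + K'\cdot\dot G = \dot H$ explicitly by expanding both sides in powers of $r$ and matching terms of order $0$, $1$ and $\geq 2$ separately. Write $K = c + \alpha\cdot r + Q(\theta)\cdot r^2 + O(r^3)$ and $\dot H = a(\theta) + b(\theta)\cdot r + O(r^2)$. Using the identification \eqref{eq:dot}, the infinitesimal action $K'\cdot\dot G$ reads, at order $0$ in $r$, as $\partial_\theta K|_{r=0}\cdot\dot\varphi + \partial_r K|_{r=0}\cdot\dot\rho = \alpha\cdot\dot\rho$ (since $K|_{\Tr_0^n}$ is constant), and at order $1$ in $r$ it produces a term $\alpha\cdot(-r\cdot\dot\varphi') + 2Q(\theta)r\cdot\dot\rho + (\partial_\theta(\text{order-1 part of }K))\cdot\dot\varphi + \dots$. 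The unknown $\dot K = \dot c + O(r^2)$ contributes only a constant at order $0$ and nothing at order $1$.

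The order-$0$ and order-$1$ equations thus decouple into two linear PDEs on $\T^n$ of the form $\alpha\cdot\partial_\theta u = (\text{known})$, the classical small-divisor equation, which is solvable (with the stated $\sigma^{-t_1}$ loss of analyticity, via \cite{Poschel:2001}) precisely when the right-hand side has zero average; the average obstructions are absorbed respectively by the free constant $\dot c$ in $\dot K$ and — this is the crucial point — by a constant shift in $\dot\rho$, which is exactly where the non-degeneracy hypothesis \eqref{eq:det} enters: the linear map sending the constant part of $\dot\rho$ to the average of the order-$1$ right-hand side is, to leading order, $\int_{\T^n}\partial_r^2 K(\theta,0)\,d\theta$, invertible by \eqref{eq:det} with inverse bounded independently of $s$. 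Once $\dot\rho$ and $\dot\varphi$ (hence $\dot G$) are fixed, the order-$\geq 2$ part of $\dot K$ is simply \emph{defined} to be $\dot H - K'\cdot\dot G$ minus its already-determined low-order part, which automatically lies in $\mathcal{\vec K}_s = \R\oplus O(r^2)$; no further equation is solved. Uniqueness follows by running the same argument on the homogeneous equation $\dot K + K'\cdot\dot G = 0$.

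The main obstacle is bookkeeping the analyticity losses and the $(1+|K|_{s+\sigma})$ factor uniformly. Each application of the small-divisor lemma on the torus costs a fixed power $\sigma^{-\tau'}$; the Cauchy estimates needed to extract the order-$0$, order-$1$ and order-$\geq 2$ Taylor coefficients of $K$, $\dot H$ and of the product $K'\cdot\dot G$ on $\Tr^n_s$ cost further powers of $\sigma^{-1}$ (this is why one works on a slightly smaller polydisc, as in the proof of Lemma~\ref{lm:exp}); and the coefficients of $K'$ that multiply $\dot G$ are controlled by $|K|_{s+\sigma}$ through Cauchy's inequality, producing the linear dependence on $|K|_{s+\sigma}$ — linear, and not worse, because $\dot G$ enters $K'\cdot\dot G$ linearly and only the first derivatives of $K$ appear. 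Collecting a finite number of such estimates and taking $t_1$, $c_1$ to be the resulting (explicit, $s$-independent) constants finishes the proof. I expect the verification that the inverse of the averaged Hessian is bounded on $\mathcal{H}_{s_0}$, hence contributes no $\sigma$-dependence, to be the one genuinely non-formal ingredient, but \eqref{eq:det} is precisely tailored for it.
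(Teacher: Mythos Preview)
Your proposal is correct and follows essentially the same route as the paper: expand $K$, $\dot K$, $\dot G$, $\dot H$ in powers of $r$, solve the order-$0$ and order-$1$ small-divisor equations on $\T^n$ (absorbing the order-$1$ average obstruction via the constant part of $\dot\rho$ and the invertibility of $\int Q$, then the order-$0$ constant via $\dot c$), and define the $O(r^2)$ part of $\dot K$ as the remainder; the paper carries this out by writing $\dot\rho = \dot R + \dot S'$ and displaying the resulting triangular system explicitly, with the estimate following from the standard bound $|L_\alpha^{-1}g|_s \leq c\sigma^{-t}|g|_{s+\sigma}$ and Cauchy's inequality, exactly as you outline.
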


\begin{proof}
  We want to solve the linear equation $\dot K + K' \cdot \dot G =
  \dot H$. Write 
  $$
  \begin{cases}
    K(\theta,r) = c + \alpha \cdot r + Q(\theta) \cdot r^2 + O(r^3)\\
    \dot K (\theta,r) = \dot c + \dot K_2(\theta,r), &\dot c\in \R,
    \quad \dot K_2 \in O(r^2)\\
    \dot G(\theta,r) = (\dot \varphi(\theta), R + S'(\theta) - r \cdot
    \dot\varphi'(\theta)), \quad &\dot\varphi \in \chi_s, \quad \dot R
    \in \R^n, \quad \dot S \in \mathcal{A}(\T^n_s).
  \end{cases}
  $$
  Expanding the equation in powers of $r$ yields
  \begin{equation}
    \label{eq:lin}
    \left(\dot c + (\dot R + \dot S') \cdot \alpha\right) + r \cdot
    \left(-\dot\varphi' \cdot \alpha + 2 Q \cdot (\dot R + \dot
      S')\right) + \dot K_2 = \dot H =: \dot H_0 + \dot H_1 \cdot r +
    O(r^2),
  \end{equation}
  where the term $O(r^2)$ on the right hand side does not depend on
  $\dot K_2$.


  Fourier series and Cauchy's inequality show that if $g \in
  \mathcal{A}(\T^n_{s+\sigma})$ has zero average, there is a unique
  function $f \in \mathcal{A}(\T^n_s)$ of zero average such that
  $L_\alpha f := f' \cdot \alpha = g$, and $|f|_s \leq c
  \sigma^{-t} |g|_{s+\sigma}$~\cite{Poschel:2001}.

  Equation~(\ref{eq:lin}) is triangular in the unknowns and
  successiveley yields:
  $$
  \begin{cases}
    \dot S &= L_\alpha^{-1} \left(\dot H_0 - \int_{\T^n} \dot
      H_0(\theta)\, d\theta \right)\\
    \dot R &= \frac{1}{2} \left(\int_{\T^n} Q(\theta)\,
      d\theta\right)^{-1} \int_{\T^n} \left( \dot H_1(\theta) - 2
      Q(\theta) \cdot \dot S'(\theta) \right) \, d\theta\\
    \dot \varphi &= L_\alpha^{-1} \left( \dot H_1(\theta) - 2
      Q(\theta)
      \cdot (\dot R + \dot S'(\theta)) \right)\\
    \dot c &= \int_{\T^n} \dot H_0 (\theta) \, d\theta - \dot R \cdot
    \alpha\\
    \dot K_2 &= O(r^2),
  \end{cases}
  $$
  and, together with Cauchy's inequality, the wanted estimate.
\end{proof}


\section{The local transversality property}

Let us bound the discrepancy between the action of $\exp(- \dot G)$ and
the infinitesimal action of $-\dot G$.

\begin{lemma} \label{lm:2} For all $(K, \dot H) \in
  \mathcal{K}^\alpha_{s+\sigma} \times \mathcal{H}_{s+\sigma}$ such
  that $(1+|K|_{s+\sigma})|\dot H|_{s+\sigma} \leq \gamma_2
  \sigma^{\tau_2}$, if $(\dot K, \dot G) \in \mathcal{\vec K} \times
  \gk_s$ solves the equation $\dot K + K' \circ \dot G = \dot H$
  (lemma~\ref{lm:1}), then $\exp \dot G \in \mathcal{G}_s$, $|\exp\dot
  G - \id|_s \leq \sigma$ and
  $$|(K+\dot H) \circ \exp(-\dot G)-(K+\dot K)|_s\leq c_2 \sigma^{-t_2}
  (1+|K|_{s+\sigma})^2|\dot H|_{s+\sigma}^2.$$
\end{lemma}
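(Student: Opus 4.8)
The plan is to estimate the error term by a Taylor-with-integral-remainder argument along the flow of $-\dot G$. First, since $(1+|K|_{s+\sigma})|\dot H|_{s+\sigma}\leq\gamma_2\sigma^{\tau_2}$, Lemma~\ref{lm:1} gives $|\dot G|_s\leq c_1\sigma^{-t_1}(1+|K|_{s+\sigma})|\dot H|_{s+\sigma}$, and by choosing $\gamma_2$ small and $\tau_2$ large enough (namely $\tau_2\geq t_1+2$) we get $|\dot G|_{s+\sigma/2}\leq\gamma_0(\sigma/2)^2$, so Lemma~\ref{lm:exp} applies on the domain $\T^n_{s+\sigma/2}$: we obtain $\exp\dot G\in\mathcal{G}_{s+\sigma/2}$ (hence in $\mathcal{G}_s$) and $|\exp\dot G-\id|_{s+\sigma/2}\leq c_0(\sigma/2)^{-1}|\dot G|_{s+\sigma}$, which is $\leq\sigma$ after shrinking $\gamma_2$ further. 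The same bound controls $\exp(-\dot G)$, so the composition $(K+\dot H)\circ\exp(-\dot G)$ is well-defined on $\T^n_s$.

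Next I would write, for $t\in[0,1]$, $\phi(t):=(K+\dot H)\circ\exp(-t\dot G)$, a curve in $\mathcal{H}_s$ (using a slightly larger domain $\T^n_{s+\sigma/4}$ for intermediate estimates so that Cauchy's inequality can convert sup-norm bounds on $\exp(-t\dot G)-\id$ into derivative bounds). Then $\phi(0)=K+\dot H$ and $\phi(1)=(K+\dot H)\circ\exp(-\dot G)$, and $\frac{d}{dt}\phi(t)=-(K+\dot H)'\circ\exp(-t\dot G)\cdot(\dot G\circ\exp(-t\dot G))$, the infinitesimal action of $-\dot G$ evaluated at the transported point. At $t=0$ this derivative is $-(K+\dot H)'\cdot\dot G=-K'\cdot\dot G-\dot H'\cdot\dot G=(\dot K-\dot H)-\dot H'\cdot\dot G$ by the linear equation of Lemma~\ref{lm:1}. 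Hence
$$
\phi(1)-(K+\dot K)=\big(\phi(0)+\phi'(0)-(K+\dot K)\big)+\int_0^1(1-t)\,\phi''(t)\,dt=-\dot H'\cdot\dot G+\int_0^1(1-t)\,\phi''(t)\,dt,
$$
so it remains to bound the first-order term $-\dot H'\cdot\dot G$ and the integral remainder. The term $\dot H'\cdot\dot G$ is estimated by Cauchy ($|\dot H'|_{s+\sigma/2}\lesssim\sigma^{-1}|\dot H|_{s+\sigma}$) times $|\dot G|_s\lesssim\sigma^{-t_1}(1+|K|_{s+\sigma})|\dot H|_{s+\sigma}$, giving the desired $\sigma^{-t_2}(1+|K|_{s+\sigma})^2|\dot H|_{s+\sigma}^2$ shape. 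For the remainder, $\phi''(t)$ involves at most two derivatives of $K+\dot H$ and one derivative of $\dot G$, all evaluated along $\exp(-t\dot G)$, which maps $\T^n_s$ into $\T^n_{s+\sigma/2}$ by the displacement bound; each derivative of $K$ contributes a factor $\sigma^{-1}$ via Cauchy, each factor of $\dot G$ contributes $|\dot G|_s$, and $|K+\dot H|_{s+\sigma}\lesssim 1+|K|_{s+\sigma}$, so $|\phi''(t)|_s\lesssim\sigma^{-t_2}(1+|K|_{s+\sigma})^2|\dot H|_{s+\sigma}^2$ as well, uniformly in $t$. Collecting constants (absorbing everything into a single $c_2$, $t_2$ independent of $s,\sigma$) finishes the proof.

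The main obstacle is bookkeeping the domain losses: one must split the original width-$\sigma$ collar into a fixed number of sub-collars so that (i) Lemma~\ref{lm:exp} applies with room to spare, (ii) the flow $\exp(-t\dot G)$ maps each intermediate domain into the next, and (iii) every application of Cauchy's inequality (there are several, since $\phi''$ needs two $\theta$-derivatives of $K$) is on a genuine sub-collar. Each such step costs a power of $\sigma$, and one has to check that the total power $t_2$ and the smallness threshold $\tau_2$ are finite and $s$-independent — which they are, since the number of steps is bounded. The actual analytic content is light once the transversality identity $K'\cdot\dot G=\dot K-\dot H$ is substituted to kill the zeroth- and first-order terms in $t$.
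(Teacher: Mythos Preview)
Your argument is correct and is essentially the paper's own proof: both Taylor-expand $H\circ\exp(-t\dot G)$ to second order in $t$, use the identity $\dot K + K'\cdot\dot G = \dot H$ to reduce the zeroth- and first-order terms to $-\dot H'\cdot\dot G$ (the paper writes this equivalently as $-(\dot K+K'\cdot\dot G)'\cdot\dot G$), and bound the remainder with Cauchy's inequality after a single split $\delta=\sigma/2$ of the collar. Your domain bookkeeping is more explicit than the paper's, and note the small sign slip in your closing remark (the identity is $K'\cdot\dot G=\dot H-\dot K$, not $\dot K-\dot H$), though your actual computation above it is correct.
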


\begin{proof}
  Set $\delta = \sigma/2$. Lemmas~\ref{lm:exp} and~\ref{lm:1} show
  that, under the hypotheses for some constant $\gamma_2$ and for
  $\tau_2=t_1+1$, we have $|\dot G|_{s+\delta} \leq \gamma_0 \delta^2$
  and $|\exp\dot G - \id|_{s} \leq \delta$.

  Let $H = K + \dot H$. Taylor's formula says
  $$\mathcal{H}_s \ni H \circ \exp (- \dot G) = H - H' \cdot \dot G +
  \left(\int_0^1 (1-t) \, H'' \circ \exp (- t \dot G) \, dt \right)
  \cdot \dot G^2$$ or, using the fact that $H = K + \dot K + K' \cdot
  \dot G$,
  $$H \circ \exp (- \dot G) - (K+\dot K) = - (\dot K + K' \cdot \dot
  G)' \cdot \dot G + \left( \int_0^1 (1-t) \, H'' \circ \exp (- t \dot
    G) \, dt \right) \cdot \dot G^2.$$ The wanted estimate thus
  follows from the estimate of lemma~\ref{lm:1} and Cauchy's
  inequality.
\end{proof}

Let $B_{s,\sigma} = \{(K,\dot H) \in \mathcal{K}^\alpha_{s+\alpha}
\times \mathcal{H}_{s+\sigma}, \; |K|_{s+\sigma} \leq \epsilon_0, \;
|\dot H|_{s+\sigma} \leq (1+\epsilon_0)^{-1}\gamma_2
\sigma^{\tau_2}\}$ (recall~(\ref{eq:det})).

\begin{minipage}[c]{0.6\linewidth}
  According to lemmas~\ref{lm:1}-\ref{lm:2}, the map $\phi :
  B_{s,\sigma} \rightarrow \mathcal{K}^\alpha_s \times \mathcal{H}_s$,
  $$\phi(K,\dot H) = (K + \dot K, (K+\dot H) \circ \exp (-\dot G) -
  (K+\dot K)),$$ 
  satisfies, if $(\hat K, \widehat{\dot H}) = \phi(K,\dot
  H)$, 
  $$|\hat K - K|_s, |\widehat{\dot H}|_s \leq c_3\sigma^{-t_3} |\dot
  H|_{s+\sigma}^2.$$ Theorem~\ref{thm:fp} applies and shows that if
  $H-K^o$ is small enough in $\mathcal{H}_{s+\sigma}$, the sequence
  $(K_j, \dot H_j) = \phi^j(K^o,H-K^o)$, $j\geq 0$, converges towards
  some $(K,0)$ in $\mathcal{K}^\alpha_s \times \mathcal{H}_s$.
\end{minipage}
\hspace{2mm}
\begin{minipage}[c]{0.3\linewidth}
  \includegraphics[scale=0.6]{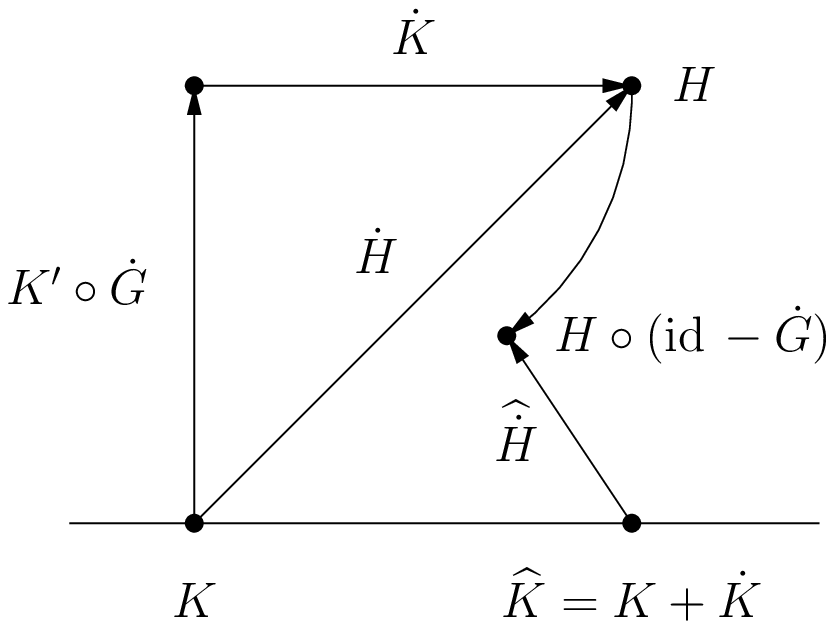}
\end{minipage}

Let us keep track of the $\dot G_j$'s solving with the $\dot K_j$'s
the successive linear equations $\dot K_j + K_j' \cdot \dot G_j = \dot
H_j$ (lemma~\ref{lm:1}). At the limit,
$$K := K^o + \dot K_0 + \dot K_1 + \cdots = H \circ \exp(-\dot G_0)
\circ \exp (-\dot G_1) \circ \cdots.$$ Moreover, lemma~\ref{lm:1}
shows that $|\dot G_j|_{s_{j+1}} \leq c_4 \sigma_j^{-t_4}|\dot
H_j|_{s_j}$, hence the isomorphisms $\gamma_j := \exp(-\dot G_0) \circ
\cdots \circ \exp (- \dot G_j)$, which satisfy
$$|\gamma_n-\id|_{s_{n+1}} \leq |\dot G_0|_{s_1} + ... + |\dot
G_n|_{s_{n+1}},$$ form a Cauchy sequence and have a limit $\gamma \in
\mathcal{G}_s$. At the expense of decreasing $|H-K^o|_{s+\sigma}$, by
the inverse function theorem, $G:= \gamma^{-1}$ exists in
$\mathcal{G}_{s-\delta}$ for some $0 < \delta <s$, so that $H = K
\circ G$.

\section*{Appendix. A fixed point theorem}
\label{app:fixed-point-theorem}

Let $(E_s,| \cdot |_s)_{0<s<1}$ and $(F_s,| \cdot |_s)_{0<s<1}$ be two
decreasing families of Banach spaces with increasing norms. On $E_s
\times F_s$, set $|(x,y)|_s = \max (|x|_s, |y|_s)$. Fix
$C,\gamma,\tau,c,t>0$. 

Let 
$$\phi : B_{s,\sigma} := \left\{ (x,y) \in E_{s+\sigma} \times
  F_{s+\sigma}, \; |x|_{s+\sigma} \leq C, \; |y|_{s+\sigma} \leq
  \gamma \sigma^\tau \right\} \rightarrow E_s \times F_s$$ be a family
of operators commuting with inclusions, such that if $(X,Y) =
\phi(x,y)$,
$$|X-x|_s, \; |Y|_s \leq c \sigma^{-t} |y|_{s+\sigma}^2.$$

In the proof of theorem~\ref{thm:kolmogorov}, ``$|x|_{s+\sigma}\leq
C$'' allows us to bound the determinant of $\int_{\T^n} Q(\theta)
d\theta$ away from $0$, while ``$|y|_{s+\sigma} \leq \gamma
\sigma^\tau$'' ensures that $\exp \dot G$ is well defined.

\begin{theorem} \label{thm:fp}
  Given $s<s+\sigma$ and $(x,y) \in B_{s,\sigma}$ such that
  $|(x,y)|_{s+\sigma}$ is small, the sequence $(\phi^j(x,y))_{j\geq 0}$
  exists and converges towards a fixed point $(\xi,0)$ in $B_{s,0}$.
\end{theorem}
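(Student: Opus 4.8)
The plan is to iterate the map $\phi$ and control the sizes of the domains (the loss-of-analyticity parameters $s_j$) and of the $y$-components simultaneously, using the quadratic estimate $|Y|_s\le c\sigma^{-t}|y|_{s+\sigma}^2$ in the classical Newton/KAM fashion. First I would choose a summable sequence of losses: fix the final radius $s$, pick $\sigma_j = \sigma_0/2^{j+1}$ so that $\sum_{j\ge0}\sigma_j = \sigma_0 =: \sigma$, and set $s_0 = s+\sigma$, $s_{j+1} = s_j - \sigma_j$, so that $s_j \downarrow s$ and every $s_j \in (s, s+\sigma]$. Write $(x_j,y_j) = \phi^j(x,y)$, $\varepsilon_j = |y_j|_{s_j}$, and $\varepsilon_0 = |y|_{s+\sigma}$.

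The core of the proof is then a single induction showing that $(x_j,y_j)\in B_{s_j,\sigma_j}$ for all $j$, so that $\phi$ can be applied again, together with the super-exponential decay $\varepsilon_{j+1}\le c\,\sigma_j^{-t}\varepsilon_j^2$. From the recursion one gets, after multiplying by a suitable constant, a bound of the form $A\varepsilon_{j+1}\le (A\varepsilon_j)^2$ with $A \sim (c\,2^{(j+1)t})^{\text{stuff}}$; the cleanest way is to prove by induction that $\varepsilon_j \le \eta^{2^j}/(c\,2^{t})^{-1}\cdot 2^{-(j)t\cdots}$ — concretely, one shows there is $K>1$ (depending only on $c,t$) and $\eta\in(0,1)$ such that if $\varepsilon_0$ is small enough then $\varepsilon_j \le K^{-2^j}$ for all $j$, which forces $\sum_j \sigma_j^{-t}\varepsilon_j <\infty$ and in particular $\varepsilon_j\to 0$. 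The two conditions defining $B_{s_j,\sigma_j}$ must be checked at each step: (i) $|y_j|_{s_j}=\varepsilon_j \le \gamma\sigma_j^{\tau}$, which holds for large $j$ automatically since $\varepsilon_j$ decays double-exponentially while $\sigma_j^\tau$ only decays exponentially, and for all $j$ provided $\varepsilon_0$ is chosen small relative to $\gamma\sigma_0^\tau/2^{?}$ — here one uses that $\varepsilon_j\le\varepsilon_0$ trivially is false, so instead one uses the induction bound $\varepsilon_j\le K^{-2^j}$ to dominate $\gamma\sigma_j^\tau = \gamma(\sigma_0/2^{j+1})^\tau$; (ii) $|x_j|_{s_j}\le C$, which follows from $|x_{j+1}-x_j|_{s_{j+1}}\le c\,\sigma_j^{-t}\varepsilon_j^2$ being summable, so $|x_j|_{s_j}\le |x|_{s+\sigma} + \sum_{i<j} c\,\sigma_i^{-t}\varepsilon_i^2 \le C$ once $|x|_{s+\sigma}$ is a bit less than $C$ and $\varepsilon_0$ is small.

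Granting the induction, convergence is immediate: $(x_j)$ is Cauchy in $E_s$ because $\sum_j |x_{j+1}-x_j|_{s_{j+1}} \le \sum_j c\,\sigma_j^{-t}\varepsilon_j^2<\infty$ and all $x_j$ may be regarded in $E_s$ by the inclusions $E_{s_j}\hookrightarrow E_s$; call its limit $\xi$. Likewise $|y_j|_s \le \varepsilon_j \to 0$, so $y_j\to 0$ in $F_s$. Finally $(\xi,0)\in B_{s,0}$ since $|\xi|_s\le C$ by passing to the limit in $|x_j|_{s_j}\le C$ (note $B_{s,0}$ only imposes $|x|_s\le C$, the $y$-condition $|y|_s\le\gamma\cdot 0^\tau=0$ being satisfied by $y=0$). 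That $(\xi,0)$ is a fixed point — at least in the weak sense that $\phi(\xi,0)=(\xi,0)$ whenever $\phi$ extends continuously there — follows from continuity of $\phi$ with respect to the inclusions and the quadratic estimate, which gives $|X-\xi|_s=0$ and $|Y|_s\le c\sigma^{-t}\cdot 0 = 0$ for any $(X,Y)$ obtained as a limit; more simply, $(\xi,0)$ being the limit of the orbit and $\phi$ being a shift on the orbit, the limit is fixed.

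The main obstacle is the bookkeeping that makes the two domain conditions (i) and (ii) survive every step of the induction from a single smallness hypothesis on $|(x,y)|_{s+\sigma}$: one must choose the threshold for $\varepsilon_0$ so that simultaneously the double-exponential bound $\varepsilon_j\le K^{-2^j}$ closes the recursion $\varepsilon_{j+1}\le c\sigma_j^{-t}\varepsilon_j^2 = c\,2^{(j+1)t}\sigma_0^{-t}\varepsilon_j^2$ (this needs $K$ large enough that $c\,2^{(j+1)t}\sigma_0^{-t}K^{-2^{j+1}}\le K^{-2^{j+1}}\cdot(\text{something}\le1)$, i.e. $c\,2^{(j+1)t}\sigma_0^{-t}\le K^{2^{j+1}-2^{j+1}}$... one rather wants $c\sigma_j^{-t}\varepsilon_j^2\le\tfrac12\varepsilon_j\cdot(\text{geometric factor})$, achieved by taking $\varepsilon_0\le c^{-1}\sigma_0^t 2^{-t}\cdot\delta$ and tracking the extra $2^{jt}$ against the squaring), and at the same time $\varepsilon_j\le\gamma\sigma_j^\tau$ and $\sum_i c\sigma_i^{-t}\varepsilon_i^2\le C-|x|_{s+\sigma}$. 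All of this is routine once the summable choice $\sigma_j=\sigma_0 2^{-j-1}$ is fixed and one argues by induction, absorbing the polynomial-in-$2^j$ factors into the double-exponential decay; I would state it as: there exist $\eta>0$ (depending on $c,t,\tau,\gamma,\sigma_0,C$) such that $|(x,y)|_{s+\sigma}\le\eta$ implies $\varepsilon_j\le \eta^{(3/2)^j}$ (say) for all $j$, which is the cleanest self-reproducing bound, and then everything else follows by summation.
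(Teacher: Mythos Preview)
Your proposal is correct and follows essentially the same route as the paper: the same geometric choice $\sigma_j=2^{-j-1}\sigma$, the same quadratic recursion $\varepsilon_{j+1}\le c\sigma_j^{-t}\varepsilon_j^2$, the same double-exponential decay beating the polynomial growth of $\sigma_j^{-t}$ and $\sigma_j^{-\tau}$, and the same telescoping sum for the $x_j$'s. The only difference is presentational: the paper carries out the product explicitly to get the closed-form bound $|y_j|_{s_j}\le (c\,4^t\sigma^{-t}|y|_{s+\sigma})^{2^j}$ using $\sum_k 2^{-k}=\sum_k k2^{-k}=2$, which replaces your exploratory bookkeeping with a single line and makes the smallness threshold on $|y|_{s+\sigma}$ explicit.
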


\begin{proof}
  It is convenient to first assume that the sequence is defined and
  $(x_j,y_j) := F^j(x,y) \in B_{s_j,\sigma_j}$, for $s_j := s +
  2^{-j}\sigma$ and $\sigma_j := s_{j} - s_{j+1}$. We may assume
  $c\geq 2^{-t}$, so that $d_j := c \sigma_j^{-t} \geq 1$. By
  induction, and using the fact that $\sum 2^{-k} = \sum k2^{-k} = 2$,
  $$|y_j|_{s_j} \leq d_{j-1} |y_{j-1}|_{s_{j-1}}^2
  \leq \cdots \leq |y|_{s+\sigma}^{2^j} \prod_{0 \leq k \leq j-1}
  d_k^{2^{k+1}} \leq \Bigl( |y|_{s+\sigma} \prod_{k \geq 0}
  d_k^{2^{-k-1}} \Bigr)^{2^j} = \bigl( c 4^t \sigma^{-t}
  |y|_{s+\sigma} \bigr)^{2^j}.$$ Given that $\sum_{n\geq 0} \mu^{2^n}
  \leq 2\mu$ if $2\mu\leq 1$, we now see by induction that if
  $|(x,y)|_{s+\sigma}$ is small enough, $(x_j,y_j)$ exists in
  $B_{s_j,\sigma_j}$ for all $j\geq 0$, $y_j$ converges to $0$ in
  $F_s$ and the series $x_j = x_0 + \sum_{0\leq k\leq j-1}
  (x_{k+1}-x_k)$ converges normally towards some $\xi \in E_s$ with
  $|\xi|_s \leq C$. 
\end{proof}

\bigskip \textit{Thank you to Ivar Ekeland, {\'E}ric S{\'e}r{\'e} and Alain
  Chenciner for pointing out a mistake and for illuminating
  discussions.}

\bibliographystyle{plain}
\bibliography{index} 

\end{document}